\begin{document}
\newtheorem{thm}{Theorem}[section]
\newtheorem{cor}[thm]{Corollary}
\newtheorem{lem}[thm]{Lemma}
\newtheorem{exm}[thm]{Example}
\newtheorem{prop}[thm]{Proposition}
\theoremstyle{definition}
\newtheorem{defn}[thm]{Definition}
\theoremstyle{remark}
\newtheorem{rem}[thm]{Remark}
\numberwithin{equation}{section}
\begin{center}
{\LARGE\bf On  multidimensional fixed-point theorems and their applications}
\footnote{MSC2000: 54H25, 47H10.
Keywords and phrases:  fixed-point,  partially ordered metric space, Hammerstein integral equation}
\\
\vspace{.25in}{\large{H. Akhadkulov\footnote{School of Quantitative Sciences, University Utara Malaysia, CAS 06010, UUM  Sintok, Kedah Darul Aman, Malaysia.}}}$^{,*}$,
S. Akhatkulov\footnote{Faculty of Applied Mathematics and Informatics, Samarkand State University, Boulevard st. 15,140104 Samarkand, Uzbekistan.

* Corresponding author: E-mail: habibulla@uum.edu.my }, T. Y. Ying$^2$ and R. Tilavov$^3$

\end{center}

\title{On  multidimensional fixed-point theorems and their applications}

\begin{abstract}
The purpose of this paper is to present 
some multidimensional fixed-point theorems and their applications.
For this, we provide a multidimensional fixed point theorem and then using this  theorem
we prove the existence and uniqueness of a solution of a nonlinear Hammerstein integral equation.
Moreover, we provide an example to illustrate the hypotheses and the abstract
result of this paper.
\end{abstract}
 \section{Introduction and Preliminaries}
Many problems which arise in mathematical physics, engineering,
biology, economics and etc., lead to mathematical models described by nonlinear
integral equations. For instance, the Hammerstein integral
equations appear in nonlinear physical phenomena such as electro-magnetic fluid
dynamics, reformulation of boundary value problems with a nonlinear boundary
condition (see \cite{atk}). A Hammerstein integral equation is introduced as follows
$$
x(t)=\int_{a}^{b}\mathcal{G}(t,s)H(s,x(s))ds+p(t).
$$
The aim of this paper is to investigate this integral equation
under a certain conditions of $\mathcal{G}$ and $H.$
For this, we use the methods of  multidimensional fixed point theorems.
The concept of multidimensional fixed point i.e., \emph{$\Upsilon$-fixed point} was introduced
by Rold\`{a}n  \emph{et. al.} \cite{Roldan et al 2012, Roldan2014}.
This notion covers the concepts of \emph{coupled, tripled, quadruple} fixed point.
We refer the reader to the references  \cite{akhad5, Abbas1, Berinde Borcur, Guo Laksh, Habib} in which were introduced the concept of coupled, tripled, quadruple fixed points and obtained related theorems.
The uniqueness and  existence theorems of multidimensional fixed point and their
applications  to nonlinear integral equations, matrix equations and the system of matrix equations
have been developed in    \cite{akhad1}-\cite{akhad4}, \cite{akhad6}.
In this paper, by using  multidimensional fixed point theorems,  we prove the existence and uniqueness of solution of a nonlinear Hammerstein integral equation under a certain conditions of $\mathcal{G}$ and $H.$
Moreover, we provide an example to illustrate the hypotheses and the abstract
result of this paper. Let us introduce some necessary concepts and tools which help us to formulate
our theorems.  Denote by $(X,d,\preceq)$ a \emph{partially
ordered metric space}.

\begin{defn}
An ordered metric space $(X,d,\preceq)$ is called \emph{regular}  if it satisfies  the following:
\begin{enumerate}
  \item [-] if $\{x_{m}\}$ is a nondecreasing sequence and $\{x_{m}\}\overset{d}{\rightarrow} x$,
then $x_{m}\preceq x$ for all $m;$
  \item [-] if $\{y_{m}\}$ is a nonincreasing sequence and $\{y_{m}\}\overset{d}{\rightarrow} y$
then $y_{m}\succeq y$ for all $m$.
\end{enumerate}
\end{defn}
Taking a natural number $k\geq 2$ we consider the set $\Lambda_{k}=\{1,2,\ldots,k\}.$
Let $\{\mathcal{A},\mathcal{B}\}$ be a partition  of $\Lambda_{k}$
that is $\mathcal{A}\cup \mathcal{B}=\Lambda_{k}$ and $\mathcal{A}\cap \mathcal{B}=\emptyset.$
Using this partition and  partially ordered metric space $(X, d, \preceq)$
we define a $k$-dimensional partially ordered metric space $(X^{k},\mathbf{\textbf{d}}_{k}, \preceq_{k})$
as follows:
\begin{itemize}
  \item the $k$-cartesian power of a set $X$
   $$
   X^{k}=\underset{k}{\underbrace{X\times X\times\cdot\cdot\cdot \times X}}=\{(\textbf{x}=(x_{1}, x_{2},...,x_{k})):| x_{i}\in X
   \,\,\,\,\text{for all}\,\,\, i\in \Lambda_{k}\};
   $$
  \item the maximum metric $\mathbf{\textbf{d}}_{k}:X^{k}\times X^{k}\rightarrow [0, +\infty),$ given by
$$
\mathbf{\textbf{d}}_{k}(\textbf{x},\textbf{y})=\underset{1\leq i\leq k}{\max}\{d(x_{i},y_{i})\},
$$
where $\textbf{x}=(x_{1}, x_{2},...,x_{k}), \textbf{y}=(y_{1}, y_{2},...,y_{k})\in X^{k};$
  \item the partial order w.r.t $\{\mathcal{A},\mathcal{B}\}$ that is, for any
  $\textbf{x}=(x_{1}, x_{2},...,x_{k})$ and $\textbf{y}=(y_{1}, y_{2},...,y_{k}) \in X^{k}$ we have
\begin{equation*}\label{n1}
 \textbf{x}\preceq_{k} \textbf{y} \Leftrightarrow \left\{
\begin{array}{ll}
x_{i}\preceq y_{i}, & \text{if }\quad i\in \mathcal{A},\\
x_{i}\succeq y_{i}, & \text{if} \quad i\in \mathcal{B}.
\end{array}
\right.
\end{equation*}
\end{itemize}
 It is easy to see that if  $(X, d)$ is a complete metric space,
 then  $(X^{k},\mathbf{\textbf{d}}_{k})$ is a complete metric space.
\begin{defn}
We say that a mapping  $F:X^{k}\rightarrow X$ has the \emph{mixed monotone} property w.r.t partition  $\{\mathcal{A}, \mathcal{B}\},$
if $F$ is  monotone nondecreasing in arguments of $\mathcal{A}$ and monotone nonincreasing in arguments of $\mathcal{B}.$
\end{defn}
We define  the following set of mappings:
$$
 \Omega_{\mathcal{A},\mathcal{B}} =\{\sigma:\Lambda_{k}\rightarrow \Lambda_{k}:\sigma(\mathcal{A})
 \subseteq \mathcal{A},\,\, \sigma(\mathcal{B})\subseteq \mathcal{B}\},
  $$
  $$
  \Omega'_{\mathcal{A}, \mathcal{B}} =\{\sigma:\Lambda_{k}\rightarrow \Lambda_{k}:\sigma(\mathcal{A})\subseteq \mathcal{B},\,\, \sigma(\mathcal{B})\subseteq \mathcal{A}\}.
$$
Let $\Upsilon=(\sigma_{1},\sigma_{2},\ldots,\sigma_{k})$ be $k$-tuple of mappings
of $\sigma_{i}:\Lambda_{k}\rightarrow \Lambda_{k}$
such that $\sigma_{i}\in \Omega_{\mathcal{A},\mathcal{B}}$ if $i\in \mathcal{A}$
and $\sigma_{i}\in \Omega'_{\mathcal{A},\mathcal{B}}$ if $i\in \mathcal{B}.$
In the sequel we consider only such kind of $k$-tuple of mappings.
\begin{defn}
A point $\textbf{x}=(x_{1},x_{2},\ldots,x_{k})\in X^{k}$ is called
\emph{$\Upsilon$-fixed  point} of a mapping $F:X^{k}\rightarrow X$ if
$$
F(x_{\sigma_{i} (1)},x_{\sigma_{i} (2)},\ldots,x_{\sigma_{i} (k)})=x_{i}
$$
for all $i\in \Lambda_{k}.$
\end{defn}
\section{A Multidimensional fixed point theorem}
In this section we provide a multidimensional fixed point theorem
which will be used in the next section. We need the following definition.
\begin{defn}
A function $\psi:[0, +\infty)\rightarrow [0, +\infty)$ is called \emph{altering distance function}, if
$\psi$ is continuous, monotonically increasing and $\psi(\{0\})=\{0\}.$
\end{defn}
The following theorem has been obtained by Akhadkulov et. al in \cite{akhad1}.
\begin{thm}\label{roldan2}
Let $(X,d,\preceq)$ be a complete partially ordered metric space.
Let $\Upsilon:\Lambda_{k}\rightarrow \Lambda_{k}$ be a $k$-tuple  mapping  $\Upsilon=(\sigma_{1}, \sigma_{2},...,\sigma_{k})$
 such that $\sigma_{i}\in \Omega_{\mathcal{A}, \mathcal{B}}$ if $i\in \mathcal{A}$  and
  $\sigma_{i}\in \Omega'_{\mathcal{A}, \mathcal{B}}$ if $i\in \mathcal{B}.$
Let $F:X^{k}\rightarrow X$ be a mapping which obeys the following conditions:
\begin{enumerate}
  \item[(i)] there exists an altering distance function $\psi,$
  an upper semi-continuous function $\theta:[0, +\infty)\rightarrow [0,+\infty)$
   and a  lower semi-continuous function  $\varphi:[0,+\infty)\rightarrow [0,+\infty)$
   such that for all
   $\mathbf{x}=(x_{1},x_{2},\ldots,x_{k}), \mathbf{y}=(y_{1},y_{2},\ldots,y_{k}),$
   $\mathbf{x},\mathbf{y}$  with $\mathbf{x}\preceq_{k}\mathbf{y}$ we have
\[
\psi(d(F(\mathbf{x}),F(\mathbf{y})))\leq
\theta(\mathbf{\textbf{d}}_{k}(\mathbf{x}, \mathbf{y}))-\varphi( \mathbf{\textbf{d}}_{k}(\mathbf{x}, \mathbf{y}))
\]
where $\theta(0)=\varphi(0)=0$ and $\psi(x)-\theta(x)+\varphi(x)>0$ for all $x>0$;
  \item[(ii)] there exists $\mathbf{x}^{0}=(x^{0}_{1},x^{0}_{2},\ldots,x^{0}_{k})$ such that
  $x^{0}_{i}\preceq_{i} F(x^{0}_{\sigma_{i} (1)},x^{0}_{\sigma_{i} (2)},\ldots,x^{0}_{\sigma_{i} (k)})$ for all $i \in \Lambda_{k};$

  \item[(iii)] $F$ has the mixed monotone property w.r.t $\{\mathcal{A}, \mathcal{B}\}$;

  \item[(iv)](a) $F$ is continuous or\\
   (b) $(X,d,\preceq)$ is regular.
\end{enumerate}
Then $F$ has a  $\Upsilon$-fixed point.
Moreover
\begin{itemize}
  \item [(v)] if for any $\mathbf{x}=(x_{1},x_{2},\ldots,x_{k}), \mathbf{y}=(y_{1},y_{2},\ldots,y_{k})$
    there exists a point
$\mathbf{z}=(z_{1},z_{2},\ldots,z_{k})$ such that
$\mathbf{x}\preceq_{k}\mathbf{z}$ and $\mathbf{y}\preceq_{k}\mathbf{z},$ then $F$ has a unique $\Upsilon$-fixed point  $\mathbf{x}^{*}=(x^{*}_{1},x^{*}_{2},\ldots ,x^{*}_{k}).$
\end{itemize}
\end{thm}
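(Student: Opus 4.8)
The plan is to recast the $\Upsilon$-fixed point problem for $F$ as an ordinary fixed point problem on the product space. Define $T\colon X^{k}\to X^{k}$ by
\[
T\mathbf{x}=\bigl(F(x_{\sigma_{1}(1)},\dots,x_{\sigma_{1}(k)}),\ \dots,\ F(x_{\sigma_{k}(1)},\dots,x_{\sigma_{k}(k)})\bigr),
\]
and write $\mathbf{x}^{[i]}=(x_{\sigma_{i}(1)},\dots,x_{\sigma_{i}(k)})$, so that $\Upsilon$-fixed points of $F$ are exactly the fixed points of $T$. The first step is to check that $T$ is nondecreasing for $\preceq_{k}$. If $\mathbf{x}\preceq_{k}\mathbf{y}$, then for each $i$ the membership $\sigma_{i}\in\Omega_{\mathcal{A},\mathcal{B}}$ (resp.\ $\sigma_{i}\in\Omega'_{\mathcal{A},\mathcal{B}}$) guarantees that $\mathbf{x}^{[i]}$ and $\mathbf{y}^{[i]}$ are $\preceq_{k}$-comparable with the same (resp.\ the opposite) orientation; combining this with the mixed monotone property (iii) and the rule $\sigma_{i}\in\Omega_{\mathcal{A},\mathcal{B}}$ for $i\in\mathcal{A}$, $\sigma_{i}\in\Omega'_{\mathcal{A},\mathcal{B}}$ for $i\in\mathcal{B}$, one obtains $F(\mathbf{x}^{[i]})\preceq F(\mathbf{y}^{[i]})$ for $i\in\mathcal{A}$ and $F(\mathbf{x}^{[i]})\succeq F(\mathbf{y}^{[i]})$ for $i\in\mathcal{B}$, i.e.\ $T\mathbf{x}\preceq_{k}T\mathbf{y}$. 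The same bookkeeping shows that hypothesis (ii) is precisely $\mathbf{x}^{0}\preceq_{k}T\mathbf{x}^{0}$.

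Next I would transfer the contractive inequality (i) to $T$. For comparable $\mathbf{x}\preceq_{k}\mathbf{y}$, pick $i_{0}$ with $\mathbf{d}_{k}(T\mathbf{x},T\mathbf{y})=d\bigl(F(\mathbf{x}^{[i_{0}]}),F(\mathbf{y}^{[i_{0}]})\bigr)$; applying (i) to the comparable pair $\mathbf{x}^{[i_{0}]},\mathbf{y}^{[i_{0}]}$ and using $\mathbf{d}_{k}(\mathbf{x}^{[i_{0}]},\mathbf{y}^{[i_{0}]})\le\mathbf{d}_{k}(\mathbf{x},\mathbf{y})$ yields
\[
\psi\bigl(\mathbf{d}_{k}(T\mathbf{x},T\mathbf{y})\bigr)\le\theta(r)-\varphi(r),\qquad r:=\mathbf{d}_{k}(\mathbf{x}^{[i_{0}]},\mathbf{y}^{[i_{0}]})\le\mathbf{d}_{k}(\mathbf{x},\mathbf{y}).
\]
Because $\theta-\varphi$ is \emph{not} assumed monotone, the auxiliary value $r$ cannot simply be enlarged to $\mathbf{d}_{k}(\mathbf{x},\mathbf{y})$; dealing with this is the step I expect to require the most care. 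The device is: whenever a limit must be taken, pass to a subsequence along which $r$ converges to some $\rho$, then invoke upper semicontinuity of $\theta$, lower semicontinuity of $\varphi$, monotonicity and continuity of $\psi$, and the dichotomy ``$\rho>0\Rightarrow\theta(\rho)-\varphi(\rho)<\psi(\rho)$, while $\rho=0\Rightarrow\theta(\rho)-\varphi(\rho)=0$'' coming from $\psi(t)-\theta(t)+\varphi(t)>0$ on $(0,\infty)$ and $\theta(0)=\varphi(0)=0$.

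With these preliminaries the Picard iteration is routine. Set $\mathbf{x}^{n+1}=T\mathbf{x}^{n}$; by the first step the sequence is $\preceq_{k}$-nondecreasing, so the displayed estimate applies to each consecutive pair, and writing $\delta_{n}=\mathbf{d}_{k}(\mathbf{x}^{n},\mathbf{x}^{n+1})$ it forces $\delta_{n}<\delta_{n-1}$ whenever $\delta_{n-1}>0$; hence $\delta_{n}\downarrow\delta$, and the limiting argument above gives $\delta=0$. The usual ``suppose $\{\mathbf{x}^{n}\}$ is not Cauchy'' device --- extract $m_{j}>n_{j}$ with $\mathbf{d}_{k}(\mathbf{x}^{n_{j}},\mathbf{x}^{m_{j}})\ge\varepsilon>\mathbf{d}_{k}(\mathbf{x}^{n_{j}},\mathbf{x}^{m_{j}-1})$, use $\delta_{n}\to0$ and the triangle inequality to get $\mathbf{d}_{k}(\mathbf{x}^{n_{j}},\mathbf{x}^{m_{j}})\to\varepsilon$ and $\mathbf{d}_{k}(\mathbf{x}^{n_{j}+1},\mathbf{x}^{m_{j}+1})\to\varepsilon$, then apply the displayed estimate to the comparable pair $\mathbf{x}^{n_{j}}\preceq_{k}\mathbf{x}^{m_{j}}$ and pass to the limit along a subsequence --- yields a contradiction, so $\{\mathbf{x}^{n}\}$ is Cauchy and, $(X^{k},\mathbf{d}_{k})$ being complete, converges to some $\mathbf{x}^{*}$. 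It remains to show $T\mathbf{x}^{*}=\mathbf{x}^{*}$: under (iv)(a), $T$ is continuous and one passes to the limit in $\mathbf{x}^{n+1}=T\mathbf{x}^{n}$; under (iv)(b), regularity applied coordinatewise (nondecreasing coordinates for $i\in\mathcal{A}$, nonincreasing for $i\in\mathcal{B}$) gives $\mathbf{x}^{n}\preceq_{k}\mathbf{x}^{*}$ for every $n$, and applying the displayed estimate to $\mathbf{x}^{n},\mathbf{x}^{*}$ and letting $n\to\infty$ --- the right-hand side tends to $\theta(0)-\varphi(0)=0$ since $\theta$ is upper semicontinuous at $0$ and $\varphi\ge0$ --- forces $(T\mathbf{x}^{n})_{i}\to(T\mathbf{x}^{*})_{i}$, while $(T\mathbf{x}^{n})_{i}=x_{i}^{n+1}\to x_{i}^{*}$, so $T\mathbf{x}^{*}=\mathbf{x}^{*}$. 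Unwinding the definition of $T$, $\mathbf{x}^{*}$ is a $\Upsilon$-fixed point of $F$.

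For uniqueness under (v), let $\mathbf{x}^{*},\mathbf{y}^{*}$ be $\Upsilon$-fixed points and choose $\mathbf{z}$ with $\mathbf{x}^{*}\preceq_{k}\mathbf{z}$ and $\mathbf{y}^{*}\preceq_{k}\mathbf{z}$. Since $T$ is nondecreasing and $T\mathbf{x}^{*}=\mathbf{x}^{*}$, induction gives $\mathbf{x}^{*}\preceq_{k}T^{n}\mathbf{z}$ for all $n$, and likewise $\mathbf{y}^{*}\preceq_{k}T^{n}\mathbf{z}$. Running the displayed estimate on the comparable pairs $\mathbf{x}^{*},T^{n}\mathbf{z}$ shows $a_{n}=\mathbf{d}_{k}(\mathbf{x}^{*},T^{n}\mathbf{z})$ is eventually strictly decreasing and, by the same subsequence argument, $a_{n}\to0$; thus $T^{n}\mathbf{z}\to\mathbf{x}^{*}$, and symmetrically $T^{n}\mathbf{z}\to\mathbf{y}^{*}$, so $\mathbf{x}^{*}=\mathbf{y}^{*}$ by uniqueness of limits.
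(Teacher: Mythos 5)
The paper does not actually prove Theorem \ref{roldan2}; it is imported verbatim from \cite{akhad1}, so there is no in-paper argument to compare against. Your proposal is correct and follows the standard route used in that line of work (Rold\'{a}n et al., \cite{akhad1}): reduce the $\Upsilon$-fixed point problem to an ordinary fixed point problem for the induced map $T$ on $(X^{k},\mathbf{d}_{k},\preceq_{k})$, verify that $T$ is $\preceq_{k}$-nondecreasing and inherits the $(\psi,\theta,\varphi)$-contractive estimate, and run the usual Picard/Cauchy argument. You also correctly isolate the one genuinely delicate point --- that $\theta-\varphi$ need not be monotone, so the auxiliary distance $r=\mathbf{d}_{k}(\mathbf{x}^{[i_{0}]},\mathbf{y}^{[i_{0}]})$ cannot simply be replaced by $\mathbf{d}_{k}(\mathbf{x},\mathbf{y})$ --- and your subsequence-plus-semicontinuity device handles it properly. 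The only caveat worth recording is that several of your limit steps (e.g.\ deducing $t_{n}\to 0$ from $\psi(t_{n})\to 0$, and $\delta_{n}<\delta_{n-1}$ when $r=0$) implicitly use $\psi(t)=0\Leftrightarrow t=0$; the paper's definition literally only states $\psi(0)=0$, but this stronger property is the standard meaning of ``altering distance function'' and is clearly intended.
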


\section{An application of Theorem \ref{roldan2}}
In this section, we apply  Theorem \ref{roldan2} to a
nonlinear Hammerstein integral equation  to
show the existence and  uniqueness of solution.
Let $T>1$ be a real number.
 Consider the following nonlinear  Hammerstein  integral equation on $C([1,T])$:
\begin{equation}\label{eq41}
x(t)=\int_{1}^{T} \mathcal{G}(t,s)\Big[{\sum_{i=1}^{2m} f_{i}(s,x(s))}\Big]ds+p(t), \quad t\in [1,T].
\end{equation}
In order to show the existence of a solution of equation (\ref{eq41}) we assume:
\begin{enumerate}
  \item[(a)] $f_{i}:[1,T]\times \mathbb{R}\rightarrow \mathbb{R},\quad 1\leq i \leq 2m$ are continuous;
  \item[(b)] $p:[1,T]\rightarrow \mathbb{R}$ is continuous;
  \item[(c)] $\mathcal{G}:[1,T]\times [1,T]\rightarrow [0,\infty)$ is continuous;
  \item[(d)]there exist positive constants $\eta_{1}, \eta_{2},...,\eta_{2m}$  such that
  $$
\max_{1\leq i \leq 2m}\eta_{i}\leq \Big(2m\underset{0\leq t\leq T}{\max}\int^{T}_{1}\mathcal{G}(t,s)ds\Big)^{-1}
  $$
 for all $1\leq i \leq 2m$ and
    $$
    0\leq f_{2i-1}(s,y)-f_{2i-1}(s,x)\leq\eta_{2i-1}\log\Big(1+y-x\Big),
    $$
    $$
    -\eta_{2i}\log\Big(1+y-x\Big)\leq f_{2i}(s,y)-f_{2i}(s,x)\leq 0
    $$
    for all $x,y\in\mathbb{R}, \,\,\,  y\geq x$ and  $1\leq i\leq m.$
  \item[(e)] there exist continuous functions $y^{0}_{1},y^{0}_{2},\ldots,y^{0}_{2m}:[1,T]\rightarrow \mathbb{R}$ such that
    $y^{0}_{2r-1}(t)\leq H_{2r-1}(t),$ $1\leq r\leq m$ and $y^{0}_{2r}(t)\geq H_{2r}(t),$ $1\leq r\leq m$
    for all $t\in [0,T]$  where
    $$
    H_{1}(t)=\int^{T}_{1}\mathcal{G}(t,s)\Big[\sum_{i=1}^{2m} f_{i}(s,y^{0}_{i}(s))\Big]ds+p(t)
    $$
   and
   $$
    H_{r}(t)=\int^{T}_{1}\mathcal{G}(t,s)\Big[\sum_{i=1}^{2m-r+1} f_{i}(s,y^{0}_{i+r-1}(s))+\sum_{\ell=0}^{r-2} f_{2m-\ell}(s,y^{0}_{r-1-\ell}(s))\Big]ds+p(t),
    $$
for $2\leq r\leq 2m.$
\end{enumerate}
Note that the equation (\ref{eq41}) has been studied in \cite{H1},
under the similar assumptions. The main difference  is the
contraction condition i.e., the assumption  (e).
We have the following.
\begin{thm}\label{th4}
Under assumptions (a)-(e), equation (\ref{eq41}) has a unique solution in $C[1,T].$
\end{thm}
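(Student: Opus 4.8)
The plan is to deduce Theorem \ref{th4} from Theorem \ref{roldan2} by setting up the right ``multidimensional'' structure on the space $X=C([1,T])$. First I would equip $X$ with the sup-metric $d(x,y)=\max_{t\in[1,T]}|x(t)-y(t)|$ and the pointwise partial order $x\preceq y \Leftrightarrow x(t)\le y(t)$ for all $t$; this makes $(X,d,\preceq)$ a complete partially ordered metric space, and it is regular because uniform limits of pointwise-monotone sequences are pointwise-monotone. The key is to choose $k=2m$, the partition $\mathcal A=\{\text{odd indices}\}$, $\mathcal B=\{\text{even indices}\}$, a suitable $\Upsilon=(\sigma_1,\dots,\sigma_{2m})$ (the cyclic-type shift suggested by the formulas for $H_r$: each $\sigma_i$ is the map $j\mapsto$ the index appearing in the $j$-th slot of $H_i$), and the operator
\[
F(x_1,x_2,\ldots,x_{2m})(t)=\int_1^T \mathcal G(t,s)\Big[\sum_{i=1}^{2m} f_i(s,x_i(s))\Big]ds+p(t).
\]
Then a $\Upsilon$-fixed point $(x_1,\dots,x_{2m})$ with all $x_i$ equal to a common $x$ is exactly a solution of \eqref{eq41}; conversely one checks (using the uniqueness clause (v) and the symmetry of the construction, or a direct argument that the $\sigma_i$-orbit forces the coordinates to coincide) that the $\Upsilon$-fixed point obtained is diagonal, so uniqueness of the solution follows from uniqueness of the $\Upsilon$-fixed point.

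Next I would verify the four hypotheses of Theorem \ref{roldan2} in turn. For (iii), the mixed monotone property: assumption (d) says $f_{2i-1}(s,\cdot)$ is nondecreasing and $f_{2i}(s,\cdot)$ is nonincreasing, and since $\mathcal G\ge 0$, $F$ is nondecreasing in the odd arguments and nonincreasing in the even ones, i.e.\ mixed monotone w.r.t.\ $\{\mathcal A,\mathcal B\}$. For (ii), the initial point: take $\mathbf x^0=(y_1^0,\dots,y_{2m}^0)$ from assumption (e); the inequalities $y^0_{2r-1}\le H_{2r-1}$ and $y^0_{2r}\ge H_{2r}$ are precisely the componentwise inequalities $x^0_i\preceq_i F(x^0_{\sigma_i(1)},\dots,x^0_{\sigma_i(k)})$ once one matches the formula for $H_r$ with $F$ composed with the permutation $\sigma_r$ — this matching is the main bookkeeping task. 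For (iv), continuity of $F$ is routine from (a), (b), (c) and dominated convergence (or just uniform continuity of $f_i$ on compacta), so (iv)(a) holds; regularity from (iv)(b) is available as a backup. Hypothesis (v) holds because any two elements of $X$ have a common upper bound in the $\preceq_k$ order (take $\mathbf z$ with odd coordinates the pointwise max and even coordinates the pointwise min of the corresponding coordinates of $\mathbf x$ and $\mathbf y$), so the $\Upsilon$-fixed point is unique.

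The crucial analytic step is the contractive condition (i). I would take $\psi(u)=u$, and aim for an estimate of the form $d(F(\mathbf x),F(\mathbf y))\le \log\bigl(1+\mathbf d_k(\mathbf x,\mathbf y)\bigr)$ when $\mathbf x\preceq_k\mathbf y$. Indeed, for such $\mathbf x,\mathbf y$ we have $x_i\le y_i$ for odd $i$ and $x_i\ge y_i$ for even $i$, so each term $f_i(s,y_i(s))-f_i(s,x_i(s))$ is nonnegative, and by (d) it is bounded above by $\eta_i\log(1+|y_i(s)-x_i(s)|)\le \eta_i\log(1+\mathbf d_k(\mathbf x,\mathbf y))$. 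Summing over $i$, multiplying by $\mathcal G(t,s)\ge 0$, integrating, and using the normalization in (d) that $\sum \eta_i\cdot\max_t\int_1^T\mathcal G(t,s)\,ds\le 1$ (which follows from $\max_i\eta_i\le (2m\max_t\int\mathcal G)^{-1}$), one gets $|F(\mathbf x)(t)-F(\mathbf y)(t)|\le \log(1+\mathbf d_k(\mathbf x,\mathbf y))$ for all $t$, hence $d(F(\mathbf x),F(\mathbf y))\le \log(1+\mathbf d_k(\mathbf x,\mathbf y))$. So one sets $\theta(u)=\log(1+u)$ (upper semicontinuous, indeed continuous, with $\theta(0)=0$) and $\varphi\equiv 0$, and checks $\psi(u)-\theta(u)+\varphi(u)=u-\log(1+u)>0$ for all $u>0$, which is the standard elementary inequality. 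The main obstacle, and the step requiring the most care, is the index bookkeeping: defining the tuple $\Upsilon$ so that the maps $\sigma_i$ genuinely lie in $\Omega_{\mathcal A,\mathcal B}$ (for odd $i$) and $\Omega'_{\mathcal A,\mathcal B}$ (for even $i$), verifying that $F\circ\sigma_r$ reproduces exactly the integrand defining $H_r$, and confirming that the resulting $\Upsilon$-fixed point is forced to be diagonal so that it yields a genuine solution of \eqref{eq41}.
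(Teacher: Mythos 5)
Your proposal is correct and follows essentially the same route as the paper: the same space, order, cyclic $\Upsilon$, operator, the same choice $\psi(u)=u$, $\theta(u)=\log(1+u)$, $\varphi\equiv 0$, and the same shift-and-uniqueness argument to force the fixed point to be diagonal. One small point in your favor: your upper bound for hypothesis (v) (pointwise max on the $\mathcal A$-coordinates, pointwise min on the $\mathcal B$-coordinates) is the correct construction for the order $\preceq_{2m}$, whereas the paper takes the max in every coordinate, which does not dominate in the $\mathcal B$-components.
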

\begin{proof}
The proof of this theorem is similar to the proof of the main theorem of \cite{H1}.
Therefore we give only the sketch of the proof.
  First, we define necessary notions as follow.
  Let  $X=C[1,T]$ be a space of continuous real functions defined on $[1,T]$
  endowed with the standard metric given by
$$
d(u,v)=\max_{1\leq t\leq T}\mid u(t)-v(t)\mid, \quad \text{for}\quad u,v\in X.
$$
A partial order $\preceq$ is defined  as follows: for any  $x,y\in C[1,T]$
we say
$$
x\preceq y\Leftrightarrow x(t)\leq y(t),\quad \text{for all}\quad t\in [1,T].
$$
Let $\Lambda_{2m}=\{1,2,\ldots,2m\}.$ Consider a partition
$$
\mathcal{A}=\{1,3,5,\ldots,2m-1\}\,\,\,\,
\text{and}\,\,\,\,\,
\mathcal{B}=\{2,4,6,\ldots,2m\}.
$$
 We choose  $\Upsilon=(\sigma_{1},\sigma_{2},\ldots,\sigma_{2m})$
 as follows:
\[
\Upsilon=\left( \begin{array}{cccc}
\sigma_{1}(1) & \sigma_{1}(2)& \ldots& \sigma_{1}(2m) \\
\sigma_{2}(1) & \sigma_{2}(2)& \ldots& \sigma_{2}(2m) \\
\sigma_{3}(1) & \sigma_{3}(2)& \ldots& \sigma_{3}(2m) \\
\ldots&\ldots&\ldots&\ldots\\
\sigma_{2m}(1) & \sigma_{2m}(2)& \ldots& \sigma_{2m}(2m)
 \end{array} \right)=
 \left( \begin{array}{cccccc}
1 & 2& \ldots& 2m-2&2m-1&2m \\
2 & 3& \ldots&2m-1&2m&1  \\
3 & 4& \ldots&2m&1&2\\
\ldots&\ldots&\ldots&\ldots&\ldots&\ldots\\
2m & 1& \ldots&2m-3&2m-2&2m-1
 \end{array} \right)
 \]
 Next we consider the operator $\mathbb{A}:X^{2m}\rightarrow X$
 $$
 \mathbb{A}(\mathbf{x})=\mathbb{A}(x_{1},x_{2},\ldots,x_{2m})=\int_{1}^{T} \mathcal{G}(t,s)\Big[{\sum_{i=1}^{2m} f_{i}(s,x_{i}(s))}\Big]ds+p(t),
 $$
where $t\in [1,T]$ and  $\mathbf{x}=(x_{1},x_{2},\ldots,x_{2m})\in X^{2m}$.
  Further, we show $\mathbb{A}$ satisfies all conditions of Theorem \ref{roldan2}.
 Let $\mathbf{x}=(x_{1},x_{2},\ldots,x_{2m}),\, \mathbf{z}=(z_{1},z_{2},\ldots,z_{2m})\in X^{2m}$.
  We define a metric in $X^{2m}$ as follows:
 $$
\mathbf{\textbf{d}}_{2m}(\mathbf{x},\mathbf{z})=\max_{ i\in \Lambda_{2m}}\{d(x_{i},z_{i})\}=\max_{i\in \Lambda_{2m}}\{\max_{1\leq t\leq T}\mid x_{i}(t)-z_{i}(t)\mid\}.
 $$
 \textbf{Step 1}.
We claim that the operator  $\mathbb{A}$ satisfies the first condition of Theorem \ref{roldan2} with
$$
  \psi(x)=x, \,\,\,\,\theta(x)=\log(1+x)\,\,\, \text{and}\,\,\,\,\varphi(x)=0.
$$
Indeed, from assumption $(\mathbf{d})$ it follows that
$$
\mathbb{A}(z_{1},z_{2},\ldots,z_{2m})(t)-\mathbb{A}(x_{1},x_{2},\ldots,x_{2m})(t)=
$$
$$
\int^{T}_{1}\mathcal{G}(t,s)\Big[\sum_{i=1}^{2m} f_{i}(s,z_{i}(s))- f_{i}(s,x_{i}(s))\Big]ds
\leq
$$
$$
2m(\max_{1\leq i\leq 2m}\eta_{i})\Big(\underset{1\leq t\leq T}{\max}\int^{T}_{1}\mathcal{G}(t,s)ds\Big)
 \cdot\log\Big(1+\mathbf{\textbf{d}}_{2m}(\mathbf{x},\mathbf{z})\Big)
$$
for any  $\mathbf{x}=(x_{1},x_{2},\ldots,x_{2m}), \mathbf{z}=(z_{1},z_{2},\ldots,z_{2m})\in X^{2m}$
 with $\mathbf{x}\preceq_{2m}\mathbf{z}.$
Hence
\begin{equation*}
d(\mathbb{A}(\mathbf{x}), \mathbb{A}(\mathbf{z}))\leq \log\Big(1+\mathbf{\textbf{d}}_{2m}(\mathbf{x},\mathbf{z})\Big)
\end{equation*}
that is
$$
\psi(d(\mathbb{A}(\mathbf{x}), \mathbb{A}(\mathbf{z})))\leq \theta({\mathbf{\textbf{d}}_{2m}(\mathbf{x},\mathbf{z})})-
\varphi({\mathbf{\textbf{d}}_{2m}(\mathbf{x},\mathbf{z})}).
$$
One can easily see  $\psi(x)-\theta(x)+\varphi(x)=x-\log(1+x)>0$ for all $x>0.$\\
\\
\textbf{Step 2}. There exists  $\mathbf{y}=(y^{0}_{1},y^{0}_{2},\ldots,y^{0}_{2m})\in X^{2m}$ such that the operator  $\mathbb{A}$ satisfies the second condition of Theorem \ref{roldan2}.
The proof of this claim follows from assumption $(\mathbf{e}).$\\
\\
\textbf{Step 3}. The operator $\mathbb{A}$ has mixed monotone property w.r.t $\{\mathcal{A},\mathcal{B}\}.$
The proof of this claim follows from assumptions $(\mathbf{c})$ and $(\mathbf{d}).$\\
\\
\textbf{Step 4}. We claim that $\mathbb{A}:X^{2m}\rightarrow X$ is continuous. Indeed, for any
 $\mathbf{x},\mathbf{z}\in X^{2m}$
verifying $\mathbf{\textbf{d}}_{2m}(\mathbf{x},\mathbf{z})\leq \delta$
we have
$$
\Big|\mathbb{A}(\mathbf{x})-\mathbb{A}(\mathbf{z})\Big|\leq
\int_{0}^{T} \mathcal{G}(t,s){\sum_{i=1}^{2m}\Big|f_{i}(s,x_{i}(s))-f_{i}(s,z_{i}(s))\Big|}ds
$$
$$
\leq \gamma{\mathbf{\textbf{d}}_{2m}(\mathbf{x},\mathbf{z})},
$$
due to the assumption $(\mathbf{d}),$
where $\gamma=2m\eta\underset{1\leq t\leq T}{\max}\int^{T}_{1}\mathcal{G}(t,s)ds.$
 Hence $\mathbb{A}$ is continuous.
  We have shown that  the operator $\mathbb{A}$ satisfies the conditions $(i)-(iv)$
 of  Theorem \ref{roldan2}.
 It implies that $\mathbb{A}$ has a
 $\Upsilon$-fixed
  point $\mathbf{x}^{*}=(x^{*}_{1},x^{*}_{2},\ldots,x^{*}_{2m}).$ That is
\begin{eqnarray*}
  \mathbb{A}(x^{*}_{1},x^{*}_{2},x^{*}_{3},\ldots,x^{*}_{2m}) &=& x^{*}_{1},\\
  \mathbb{A}(x^{*}_{2},x^{*}_{3},\ldots,x^{*}_{2m},x^{*}_{1}) &=& x^{*}_{2}, \\
\vdots \\
 \mathbb{A}(x^{*}_{2m},x^{*}_{1},\ldots,x^{*}_{2m-1}) &=& x^{*}_{2m}.
\end{eqnarray*}
It is obvious, for any $\mathbf{x}=(x_{1},x_{2},\ldots,x_{2m}),$
$\mathbf{y}=(y_{1},y_{2},\ldots,y_{2m})\in X^{2m}$
there exists a $\mathbf{q}=(q_{1},q_{2},\ldots,q_{2m})\in X^{2m}$
 such that  $\mathbf{x}\preceq_{2m}\mathbf{q}$ and $\mathbf{y}\preceq_{2m}\mathbf{q}.$
Indeed, consider the functions $q_{i}:[1,T]\rightarrow \mathbb{R}$
\[
q_{i}(s)=\max\{x_{i}(s),y_{i}(s)\}, \,s\in [1,T].
\]
Since $x_{i}(s)$ and $y_{i}(s)$ are continuous on $[1,T],$
the functions $q_{i}(s)$ are continuous on $[1,T]$ and $x_{i}(s)\leq q_{i}(s),y_{i}(s)\leq q_{i}(s)$
 for all $1\leq i\leq 2m.$
 Therefore $\mathbb{A}$ has a unique $\Upsilon$-fixed point
 $x^{*}=(x^{*}_{1},x^{*}_{2},\ldots,x^{*}_{2m}).$
 Next we show
$$
x^{*}_{1}=x^{*}_{2}=\ldots=x^{*}_{2m}.
$$
If $x^{*}=(x^{*}_{1},x^{*}_{2},\ldots,x^{*}_{2m})$ is the $\Upsilon$-fixed point of $\mathbb{A},$
 then $y^{*}=(y^{*}_{1},y^{*}_{2},\ldots,y^{*}_{2m})$  is also a
 $\Upsilon$-fixed point of $\mathbb{A},$ where $y^{*}_{i}=x^{*}_{i+1}$ $1\leq i\leq 2m-1$
 and $y^{*}_{2m}=x^{*}_{1}.$
 However, $\mathbb{A}$ has the unique $\Upsilon$-fixed point.
 Therefore $\mathbf{x}^{*}=\mathbf{y}^{*}$ hence
$$
x^{*}_{1}=x^{*}_{2}=\ldots=x^{*}_{2m}.
$$
 Finally, we have shown that there exists a continuous function $x^{*}(t)$ such that
$$
x^{*}(t)=\mathbb{A}(x^{*},x^{*},\ldots,x^{*})(t)=\int_{1}^{T} \mathcal{G}(t,s)\Big[{\sum_{i=1}^{2m} f_{i}(s,x^{*}(s))}ds\Big]+p(t).
$$
This proves Theorem \ref{eq41}.
\end{proof}
\section{Illustrative example}
In this section, we provide a representative example to illustrate how
Theorem \ref{eq41} can be applied in solving nonlinear Hammerstein  integral equation.
Let $T>1.$ Consider the following class of nonlinear Hammerstein  integral equations.
\begin{equation}\label{ex1}
  x(t)=\frac{1}{2\ln T}\int_{1}^{T}\frac{1}{ts}\ln\Big(\frac{s+x(s)}{sx(s)}\Big)ds
  +\alpha t -\frac{1}{2}\ln \frac{1+\alpha}{\alpha\sqrt{T}}\cdot\frac{1}{t},
  \,\,\,\, \text{where} \,\,\,\, \alpha>1.
\end{equation}
\begin{thm}\label{ex11}
For every $\alpha>1,$ the equation (\ref{ex1}) has a unique solution in   $C([1, T]).$
\end{thm}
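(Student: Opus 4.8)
The plan is to realise equation (\ref{ex1}) as an instance of (\ref{eq41}) with $m=1$, so that $2m=2$, and then to check the hypotheses (a)--(e) of Theorem \ref{th4}. First I would put
$$
\mathcal{G}(t,s)=\frac{1}{2ts\ln T},\qquad p(t)=\alpha t-\frac{1}{2}\ln\frac{1+\alpha}{\alpha\sqrt{T}}\cdot\frac{1}{t},
$$
$f_{1}\equiv 0$, and $f_{2}(s,x)=\ln\bigl(\frac{1}{s}+\frac{1}{x}\bigr)$ for $x\ge 1$, extended to all of $\mathbb{R}$ by the constant value $\ln\bigl(\frac{1}{s}+1\bigr)$ for $x\le 1$. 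Then $f_{2}$ is continuous and nonincreasing in its second argument, and since $\frac{s+x}{sx}=\frac{1}{s}+\frac{1}{x}$ the operator built from $f_{1},f_{2}$ reproduces the integrand of (\ref{ex1}) whenever the unknown stays $\ge 1$. Assumptions (a), (b), (c) are then immediate: $f_{i}$ and $p$ are continuous, and $\mathcal{G}\ge 0$ is continuous on $[1,T]^{2}$.

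For the contraction-type assumption (d) I would first compute $\int_{1}^{T}\mathcal{G}(t,s)\,ds=\frac{1}{2t}$, which is maximal at $t=1$, so the admissible bound is $\max_{i}\eta_{i}\le\bigl(2m\max_{t}\int_{1}^{T}\mathcal{G}(t,s)\,ds\bigr)^{-1}=1$; I therefore take $\eta_{1}=\eta_{2}=1$. The inequality demanded of $f_{1}\equiv 0$ is vacuous. For $f_{2}$ the crux is the elementary identity
$$
f_{2}(s,x)-f_{2}(s,y)=\ln\Bigl(1+\frac{s(y-x)}{x(y+s)}\Bigr),\qquad 1\le x\le y,
$$
together with $s\le x(y+s)$, which holds because $x\ge 1$, and which yields $0\le f_{2}(s,x)-f_{2}(s,y)\le\ln\bigl(1+(y-x)\bigr)$; a few analogous one-line estimates dispose of the extended range $x\le 1$. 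This is exactly the pair of inequalities on $f_{2}$ in (d).

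I would verify (e) with the single choice $y^{0}_{1}(t)=y^{0}_{2}(t)=\alpha t$. Using the standard integrals $\int_{1}^{T}\frac{ds}{s}=\ln T$ and $\int_{1}^{T}\frac{\ln s}{s}\,ds=\frac{(\ln T)^{2}}{2}$, one checks
$$
\int_{1}^{T}\mathcal{G}(t,s)\,f_{2}(s,\alpha s)\,ds+p(t)=\alpha t,
$$
so that, since $f_{1}\equiv 0$, the auxiliary functions of (e) satisfy $H_{1}(t)=H_{2}(t)=\alpha t$ and both inequalities of (e) hold with equality. Theorem \ref{th4} then provides a unique solution $x^{*}\in C([1,T])$ of the equation assembled from $\mathcal{G},f_{1},f_{2},p$; because $x^{*}(t)=\alpha t\ge\alpha>1$ it never enters the region where $f_{2}$ was extended, so it is a solution of (\ref{ex1}) itself. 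I expect the genuinely delicate steps, as opposed to routine computation, to be the bookkeeping in (d) --- verifying that the logarithmic Lipschitz constant of $f_{2}$ can be taken equal to $1$, and not larger, uniformly in $s\in[1,T]$ and over the whole extended domain --- and the argument that this extension of $f_{2}$ introduces no spurious solutions, so that uniqueness of $x^{*}$ transfers back to (\ref{ex1}).
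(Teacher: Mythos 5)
Your reduction to Theorem \ref{th4} is sound in outline but genuinely different from the paper's in two respects. First, the decomposition: the paper splits the integrand as $f_{1}(s,x)=\ln(s+x)$ (nondecreasing in $x$, placed in the $\mathcal{A}$-slot) plus $f_{2}(s,x)=-(\ln s+\ln x)$ (nonincreasing, placed in the $\mathcal{B}$-slot), whereas you take $f_{1}\equiv 0$ and load the entire integrand into the nonincreasing slot $f_{2}(s,x)=\ln\bigl(\frac{1}{s}+\frac{1}{x}\bigr)$; your verification of (d) via $f_{2}(s,x)-f_{2}(s,y)=\ln\bigl(1+\frac{s(y-x)}{x(y+s)}\bigr)\le\ln(1+(y-x))$ is correct for $1\le x\le y$, and the constant extension below $x=1$ does preserve both inequalities. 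Second, the choice of starting point for (e): the paper takes $y^{0}_{1}=\alpha t/2$, $y^{0}_{2}=3\alpha t/2$ and proves $y^{0}_{1}\le H_{1}$, $H_{2}\le y^{0}_{2}$ via the auxiliary estimate $\frac{2+3\alpha}{1+\alpha}\le e^{\alpha}$; you take $y^{0}_{1}=y^{0}_{2}=\alpha t$ and obtain $H_{1}=H_{2}=\alpha t$ by direct integration, so (e) holds with equality. Your route is computationally lighter and exhibits the solution $x(t)=\alpha t$ explicitly (which the paper relegates to a remark), at the cost of having to guess the solution in advance.

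The genuine gap is the one you flag at the end and do not close: Theorem \ref{th4} gives uniqueness for the \emph{modified} equation built from your extended $f_{2}$, and a putative solution $x$ of (\ref{ex1}) with $x(s)<1$ somewhere would not be a solution of that modified equation, so uniqueness does not automatically transfer back. You would need either to show that every solution of (\ref{ex1}) stays above $1$, or to argue directly --- for instance, every solution of (\ref{ex1}) necessarily has the form $\alpha t+C/t$ because the integral term is a constant multiple of $1/t$, which reduces uniqueness to a one-variable monotonicity statement in $C$. To be fair, the paper's own proof carries the mirror-image defect: its $f_{1}(s,x)=\ln(s+x)$ and $f_{2}(s,x)=-\ln s-\ln x$ are not defined on all of $[1,T]\times\mathbb{R}$, and (d) is only checked for $y\ge x\ge 1$, so Theorem \ref{th4} does not literally apply there either. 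But a complete proof must resolve this point one way or the other, and your write-up currently defers it.
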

\begin{proof} Denote
\begin{align*}
f_{1}(s,t)&=\ln(s+t), &  f_{2}(s&,t)=-(\ln s+\ln t),\\
\mathcal{G}(t,s)&=\frac{1}{2\ln T}\cdot \frac{1}{ts}, & p(t)&=\alpha t -\frac{1}{2}\ln \frac{1+\alpha}{\alpha\sqrt{T}}\cdot\frac{1}{t},
\end{align*}
where $s\in [1,T]$ and $t\geq 1.$
 It is easy to see that the equation (\ref{ex1}) can be presented as the
 equation  (\ref{eq41}) by using these notations.
 Our next goal is to show that the equation (\ref{ex1}) satisfies assumptions $(\mathbf{a})-(\mathbf{e}).$
 One can easily see that the functions $f_{1},$ $f_{2},$ $\mathcal{G}$ and $p$
 are continuous. We show that the assumption  $(\mathbf{d})$ is satisfied.
 A simple calculation shows that
 $$
 2\max_{1\leq t\leq T}\int_{1}^{T}\mathcal{G}(t,s)=
 2\max_{1\leq t\leq T}\frac{1}{2\ln T}\int_{1}^{T} \frac{ds}{ts}=1.
 $$
 Let $y\geq x\geq 1.$ One can see that
 $$
 0\leq f_{1}(s,y)-f_{1}(s,x)=\ln (s+y)-\ln(s+x)=\ln\Big(1+\frac{y-x}{s+x}\Big)\leq
 \ln(1+y-x);
 $$
 $$
 - \ln(1+y-x)\leq -\ln\Big(1+\frac{y-x}{x}\Big)=-\Big(\ln y-\ln x\Big)= f_{2}(s,y)-f_{2}(s,x)\leq 0.
  $$
We next show that the assumption  $(\mathbf{e})$ is fulfilled
with $y^{0}_{1}(t)=\alpha t/2$ and $y^{0}_{2}(t)=3\alpha t/2.$
It is easily seen that
\begin{align}\label{h1212}
  H_{1}(t)&=\frac{1}{2\ln T}\int_{1}^{T} \frac{1}{ts}\cdot \ln \Big(\frac{2+\alpha}{3\alpha s}\Big)ds
  +\alpha t -\frac{1}{2}\ln \frac{1+\alpha}{\alpha\sqrt{T}}\cdot\frac{1}{t},\\ \nonumber
  H_{2}(t)&=\frac{1}{2\ln T}\int_{1}^{T} \frac{1}{ts}\cdot \ln \Big(\frac{2+3\alpha}{\alpha s}\Big)ds
  +\alpha t -\frac{1}{2}\ln \frac{1+\alpha}{\alpha\sqrt{T}}\cdot\frac{1}{t}.
\end{align}
Evaluating the integrals in (\ref{h1212}) the functions $H_{1}$ and $H_{2}$ can be simplified as follow.
\begin{align}\label{h1212a}
  H_{1}(t)&=\alpha t+ \frac{1}{2t}\cdot \ln \Big(\frac{2+\alpha}{3(1+\alpha)}\Big),\\ \nonumber
  H_{2}(t)&=\alpha t+ \frac{1}{2t}\cdot \ln \Big(\frac{2+3\alpha}{1+\alpha}\Big).
\end{align}
The task is now to show
\begin{equation}\label{h1213a00}
y^{0}_{1}(t)\leq H_{1}(t) \,\,\,\,\text{and}\,\,\,\, H_{2}(t)\leq y^{0}_{2}(t).
\end{equation}
For this,  we first show that
\begin{equation}\label{h1213}
\frac{2+3\alpha}{1+\alpha}\leq e^{\alpha}\,\,\,\,\,\,\text{for}\,\,\,\,\,\, \alpha>1.
\end{equation}
Let
$$
k(\alpha):=e^{\alpha}-\frac{2+3\alpha}{1+\alpha}.
$$
One can check that
$$
k'(\alpha)=e^{\alpha}-\frac{1}{(1+\alpha)^2}>0
$$
since $\alpha$ is positive. It implies that
$k(\alpha)$ is increasing and, in consequence,
$k(\alpha)\geq k(1)=e-2.5>0.$
From inequality (\ref{h1213}) it follows that
 \begin{equation}\label{h1213a}
\ln\Big(\frac{2+3\alpha}{1+\alpha}\Big)\leq \alpha t^{2}
\end{equation}
since $t\geq 1.$ Dividing by $2t$ and adding $\alpha t$
to the both side of (\ref{h1213a}) yields
\begin{equation}\label{h1213b}
H_{2}(t)=\alpha t+ \frac{1}{2t}\cdot \ln \Big(\frac{2+3\alpha}{1+\alpha}\Big)\leq \frac{3\alpha t}{2}=y^{0}_{2}(t).
\end{equation}
Since $\alpha >1,$ it follows that
$$
\ln \Big(\frac{3+3\alpha}{2+\alpha}\Big) \leq \ln \Big(\frac{2+3\alpha}{1+\alpha}\Big).
$$
As a consequence of the last inequality and the inequality (\ref{h1213a})
we obtain
$$
\ln \Big(\frac{3+3\alpha}{2+\alpha}\Big)\leq \alpha t^{2}.
$$
Similarly as above, dividing by $(-2t)$ and adding $\alpha t$
to the both side of the last inequality we obtain
\begin{equation}\label{h1213c}
H_{1}(t)=\alpha t+ \frac{1}{2t}\cdot \ln \Big(\frac{2+\alpha}{3+3\alpha}\Big)\geq \frac{\alpha t}{2}=y^{0}_{1}(t).
\end{equation}
 Combing inequalities (\ref{h1213b}) and (\ref{h1213c})
 we can conclude that the assumption  $(\mathbf{e})$ is fulfilled
with $y^{0}_{1}(t)=\alpha t/2$ and $y^{0}_{2}(t)=3\alpha t/2.$
This finishes the proof of Theorem \ref{ex11}.
\end{proof}

\begin{rem}
The aim of Theorem \ref{ex1} is to show a strategy of applying abstract assumptions $(a)$-$(e)$
 of Theorem \ref{th4}  in some concrete examples.
 The reader can check  that  the solution of the equation (\ref{ex11}) is
 $x(t)=\alpha t.$
\end{rem}

\section{Acknowledgement}
We would like to thank the Ministry of Education of Malaysia for providing
us with the Fundamental Research Grant Scheme 
(FRGS/1/2018/STG06/UUM/02/13.
Code S/O 14192).
\bibliographystyle{amsplain,latexsym}

\end{document}